\documentclass[letterpaper, 10 pt, conference]{ieeeconf}  

\IEEEoverridecommandlockouts                              

\usepackage{mathrsfs}
\usepackage{xcolor}

\usepackage{amsmath,amssymb,amsfonts}
\usepackage{graphicx}
\usepackage{algorithm,algorithmic}

\usepackage{textcomp}

\usepackage{setspace}
\usepackage{lipsum}

\usepackage{mathtools}
\usepackage{enumerate}

\usepackage{caption}
\usepackage{subcaption}
\usepackage{algorithm}
\usepackage{float}
\newtheorem{definition}{Definition}
\newtheorem{theorem}{Theorem}

\newtheorem{corollary}{Corollary}

\usepackage{hyperref}

\makeatletter
\let\NAT@parse\undefined
\makeatother
\usepackage{cite}

\newtheorem{examplex}{Example}
\newenvironment{example}
  {\begin{examplex}}
  {\hfill$\blacksquare$\end{examplex}}

\newcommand{\R}{\mathbb{R}}

\newcommand{\sgn}{\mathrm{sgn}}
\title{Inverse Optimal Feedback Stabilization of\\ 3D Nonholonomic Vehicles in Spherical Coordinates
}

\author{Kwang Hak Kim, Mamadou Diagne, and Miroslav Krsti\'c%
\thanks{This work was supported by the Office of Naval Research under grants N00014-23-1-2376 and N00014-23-1-2831. The results and opinions in this paper are solely of the authors and do not reflect the position or the policy of the U.S. Government.}
\thanks{K. H. Kim, M. Diagne, and M. Krsti\'c are with the Department of Mechanical and Aerospace Engineering, UC San Diego, 9500 Gilman Drive, La Jolla, CA, 92093-0411, {\tt\small \{kwk001,mdiagne,krstic\}@ucsd.edu}}%
}

\begin{document}

\maketitle


\begin{abstract}
Inverse optimal control provides optimality and robustness guarantees without solving the Hamilton--Jacobi--Bellman (HJB) equation, but its application to 3D nonholonomic vehicles has been precluded by the absence of strict control Lyapunov functions (CLFs). Recently, such strict CLFs were constructed for the 3D nonholonomic vehicle in spherical coordinates, which circumvent Brockett's obstruction, on the largest possible domain. Building on this construction, this paper develops a general inverse optimal stabilizing controller for the 3D nonholonomic vehicle actuated by surge velocity, pitch rate, and yaw rate. The design is a damping $L_gV$ feedback that minimizes a meaningful cost and inherits stability margins. Its flexibility is demonstrated through two choices of penalty function: a quadratic running cost on the state, recovering a near-classical optimal control formulation, and stabilization under user-defined input constraints. Numerical simulations illustrate stabilization performance, control-effort tradeoffs, and the effect of input constraints.
\end{abstract}


\section{Introduction}

Many underwater vehicles~\cite{lapierre2003nonlinear,caccia2000guidance}  and aerial vehicles~\cite{vamvoudakis2022nonequilibrium,lugo2014dubins} share a kinematic motion along the body's longitudinal axis and steering in pitch and yaw, but no direct lateral (sway) or vertical (heave) actuation. The model actuated in surge velocity, pitch rate, and yaw rate~\cite{fossen1994guidance} is the minimal representation of this class of systems. Most control results for this class address path following and tracking~\cite{aguiar2007trajectory,fossen2024alos3d}, where the moving reference supplies the excitation that point stabilization lacks. However, stabilization of nonholonomic vehicles is fundamentally more challenging~\cite{jiang2010controlling}, since no such persistent excitation exists.

In Cartesian/Euler coordinates, the necessary conditions of Brockett, extended by Ryan, Coron, and Rosier~\cite{brockett1983asymptotic,coron1994relation,ryan1994brockett}, exclude asymptotic stabilization by time-invariant state feedback, whether continuous or discontinuous. The 3D stabilization literature relies primarily on time-varying feedback~\cite{pettersen1999time,do2002global}, which exhibits slow, oscillatory convergence; discontinuous feedback~\cite{egeland1994exponential,egeland1996feedback}, which is prone to chattering; or logic-based hybrid strategies, which can yield unpredictable, unnatural maneuvers~\cite{aguiar2002global}. Notably, \cite{Heetal2022exp3D} achieves semi-global exponential stabilization on SE(3) via a discontinuous logarithmic feedback. However, the exponential guarantee hinges on a gain condition involving a state-dependent eigenvalue bound that, as the authors note, is difficult to verify a priori and may fail for large initial displacements, leaving convergence in that region supported only by simulation.

A singular change of coordinates offers a different route around the obstruction. For the unicycle, the canonical nonholonomic system, rewriting the kinematics in polar coordinates allows stabilization by continuous time-invariant feedback and, crucially, supports strict control Lyapunov functions (CLFs)~\cite{aicardi1995,Part1_todorovskiCLF2025}. The polar states, range and bearing to the target, are moreover the quantities that LiDAR, radar, and camera sensors measure directly, such that the framework operates on egocentric measurements without reconstruction of global states. Lyapunov-based designs in polar-like coordinates exist for the 3D vehicle, but rely on discontinuous feedback~\cite{aicardi2001cusp} or on piecewise-defined feedback with cascade arguments and negative-semidefinite Lyapunov derivatives~\cite{restrepo_3d_2019}. In either case, they guarantee only asymptotic, non-exponential convergence and yield no strict CLF, which rules out inverse optimal redesign~\cite{Part2_kimIOC2025}.

Optimal control systematically balances state convergence against control effort, and optimal feedback laws inherit desirable robustness margins. For nonlinear systems, however, direct optimal design requires solving the Hamilton--Jacobi--Bellman (HJB) equation, which is often intractable. Inverse optimal control circumvents this by constructing the feedback law from a strict CLF. The resulting controller is optimal with respect to a meaningful cost and retains the robustness margins of optimal designs without solving the HJB equation~\cite{sepulchre1997constructive,krstic1998inverse}. 
The benefits of inverse optimality naturally motivate its application to underactuated systems, where existing results are limited to practical stability~\cite{do2021global}, subsystem-level optimality in the stochastic setting~\cite{do2015global}, or the planar case~\cite{Part2_kimIOC2025}. The inverse optimal control design for 3D nonholonomic vehicles, however, has been precluded by the lack of strict CLFs.

Recently, global exponential stability and strict CLFs in spherical coordinates have been established for the 3D nonholonomic vehicle on the largest domain on which the representation is well defined~\cite{kim2026spherical3d}. Building on these strict CLFs and the polar-coordinate framework of~\cite{Part2_kimIOC2025}, this paper develops families of inverse optimal stabilizers for the 3D nonholonomic vehicle in the class of damping $L_gV$ controllers~\cite{jurdjevic1978controllability,sontag2013mathematical}. The resulting feedback is optimal with respect to a meaningful cost without solving the HJB equation, retains stability margins, and accommodates user-defined input bounds via the Legendre--Fenchel transform. Two examples illustrate this flexibility through different penalty choices, both demonstrated in simulation.

\section{3D Model in Spherical Coordinates}\label{sec:model}

Consider the 3D nonholonomic kinematic model in the North-East-Down (NED) frame~\cite{fossen1994guidance}:
\begin{subequations}\label{eq:cart_3d_sys}
\begin{align}
\dot x &= v \cos\psi \cos\theta\label{eq:x_dot}\\
\dot y &= v \sin\psi \cos\theta\label{eq:y_dot}\\
\dot z &= -v \sin\theta\\
\dot \theta &= q\\
\dot \psi &= \frac{r}{\cos\theta}\label{eq:psi_dot}\,
,
\end{align}
\end{subequations}
where $(x,y,z)\in\R^3$ is the position, $\psi \in \mathbb{R}$ is yaw, $\theta \in\mathbb{R}$ is pitch (positive nose-up), and $(v,q,r)$ are forward velocity (surge), pitch rate, and yaw rate inputs respectively. The singularity caused by the division of $\cos\theta$ in~\eqref{eq:psi_dot} is circumvented by defining
\begin{equation}
\label{eq:yawrate_actual}
r = \tilde{r} \cos\theta\,.
\end{equation}
This choice is physically justified: by \eqref{eq:x_dot} and~\eqref{eq:y_dot}, $\dot x = \dot y = 0$ whenever $\cos\theta = 0$, regardless of $\psi$. In a vertical orientation the horizontal velocity vanishes, so the yaw angle, which only sets the direction of that horizontal component, has no effect on the motion.

Since Brockett's condition~\cite{brockett1983asymptotic} rules out continuous time-invariant stabilization of~\eqref{eq:cart_3d_sys}, we take the spherical transformation of~\cite{kim2026spherical3d}, defined in Table~\ref{tab:spherical_coords} and shown in Figure~\ref{fig:3d_unicycle}, in which the transformation singularity is the mechanism that evades the obstruction. The resulting coordinate transformed system is given as

\vspace{-0.5cm}
\begin{subequations}\label{eq:spheric_3d_sys}
\begin{align}
\dot{\rho} &= v\bigl(\sin\theta\sin\zeta-\cos\theta\cos\zeta\cos\gamma\bigr)\label{eq:sph_rho}\\
\dot{\delta} &= \frac{v\cos\theta}{\rho\cos\zeta}\sin\gamma\label{eq:sph_delta}\\
\dot{\gamma} &= \frac{v\cos\theta}{\rho\cos\zeta}\sin\gamma - \tilde{r} \label{eq:sph_gamma}\\
\dot{\zeta} &= \frac{v}{\rho}\bigl(\cos\theta\sin\zeta\cos\gamma+\sin\theta\cos\zeta\bigr) \label{eq:sph_zeta}\\
\dot{\theta} &= q. \label{eq:sph_theta}
\end{align}
\end{subequations}

\begin{figure}[t]
\centering
\includegraphics[width=.9\linewidth]{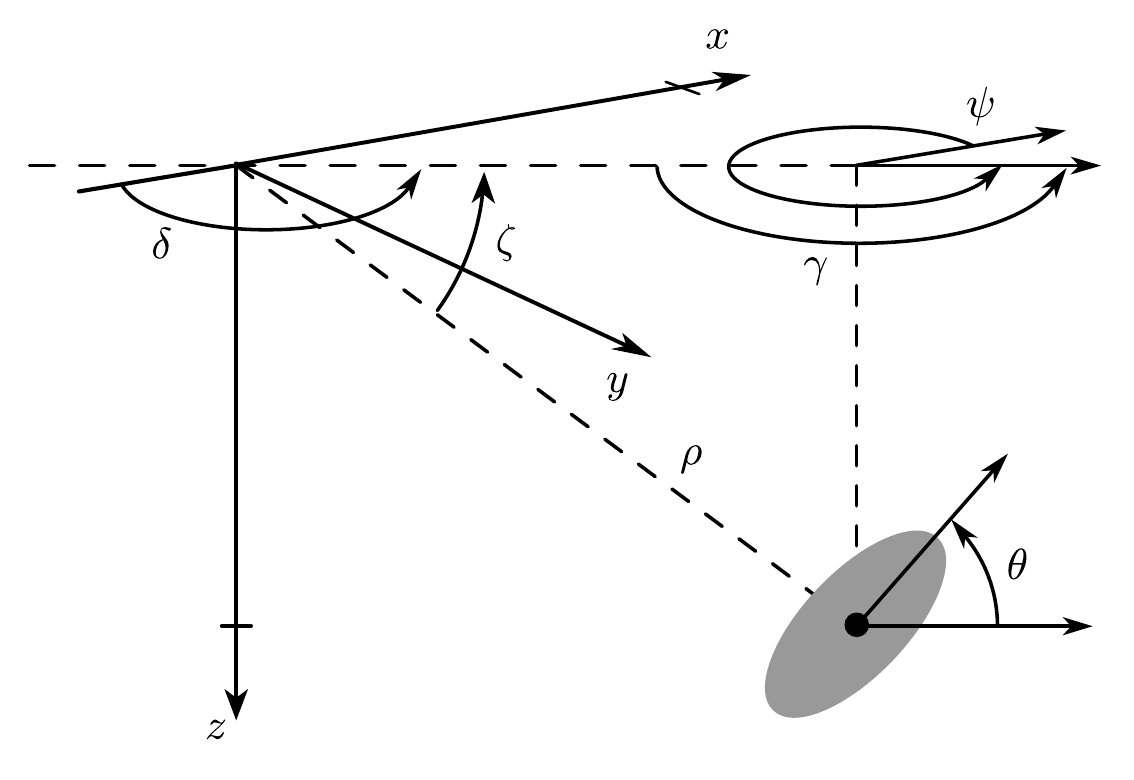}
\caption{Kinematic model of a 3D nonholonomic vehicle actuated in forward velocity (surge), pitch rate, and yaw rate.}
\label{fig:3d_unicycle}
\end{figure}

\begin{table}[t]
\centering
\renewcommand\arraystretch{1.4}
\small
\begin{tabular}{|l|l|}
\hline
\textbf{Spherical Coord.} & \textbf{Description} \\
\hline
$\rho = \sqrt{x^2 + y^2 + z^2}$ & Distance to target \\ \hline
$\delta = {\rm atan2}(-y,-x)$ & Azimuth angle \\ \hline
$\gamma = {\rm atan2}(-y,-x) - \psi$ & $z$-axis line-of-sight (LOS) angle \\ \hline
$\zeta = \arcsin\!\left(\dfrac{-z}{\rho}\right)$ & Elevation angle \\ \hline
$\theta = \theta$ & Pitch angle \\ \hline
\end{tabular}
\caption{Spherical transformation and interpretation. If the target is at ${(x^*, y^*, z^*, \psi^*) \neq 0}$, the transformation generalizes to ${\rho=\sqrt{(x-x^*)^2+(y-y^*)^2+(z-z^*)^2}}$, ${\delta= {\rm atan2}(y^*-y ,x^*-x ) -\psi^*}$, ${\gamma= \delta-\psi+\psi^*}$, and $\zeta = \arcsin\big(\frac{z^*-z}{\rho}\big)$.}
\label{tab:spherical_coords}
\end{table}

As discussed in~\cite{kim2026spherical3d}, the spherical transformation in Table~\ref{tab:spherical_coords} is undefined on the $z$-axis $\{x=y=0\}$, namely where the horizontal range $\sqrt{x^2+y^2}=\rho\cos\zeta$ vanishes and ${\rm atan2}(-y,-x)$ is undefined. The singularity appears in \eqref{eq:sph_delta}--\eqref{eq:sph_gamma} through the factor $1/(\rho\cos\zeta)$ and motivates restricting the state to the natural domain
\begin{align}
\mathcal D &\coloneqq \left\{\rho > 0\right\}\times \mathcal{T}\label{eq:domainD}\\
\mathcal{T} &\coloneqq \left\{(\delta,\gamma,\zeta,\theta)\in\mathbb{R}^4 : |\zeta|<\frac{\pi}{2}\right\}\,,
\end{align}
on which $\cos\zeta>0$ and the coordinate map is well defined. The excluded set on the $z$-axis, where the vehicle sits directly above or below the target, is a measure-zero set of codimension two whose removal leaves the state space connected. As established in~\cite{kim2026spherical3d}, no spherical or cylindrical parameterization can avoid such a ``polar'' set, and \eqref{eq:domainD} is accordingly the largest open set on which a continuous time-invariant design is possible.

Stability on $\mathcal{D}$ is measured by the metric
\begin{align}\label{eq:metricD}
|\Theta|_{\mathcal D}
\coloneqq \rho+|\delta|+|\gamma|+|\tan\zeta|+|\theta|\,,
\end{align}
where
\begin{align}
    \Theta \coloneqq (\rho,\delta,\gamma,\zeta,\theta)^\top\,.
\end{align}
The term $|\tan\zeta|$ grows unbounded as $|\zeta|\to \pi/2$ and thereby encodes the coordinate singularity at the boundary of $\mathcal D$. With this metric, we state the definitions of global asymptotic stability (GAS) and a strict CLF.

\begin{definition}[GAS on $\mathcal{D}$]
\label{def-our-GAS}
Consider the system \eqref{eq:spheric_3d_sys} with feedback laws $v,q$, and $r$
that are continuous on a state space $\mathcal{D}$ with respect to its metric. If there exists a class $\mathcal{KL}$ function $\beta$ such that, for all $t\geq t_0$, it holds that $|\Theta(t)|_{\mathcal{D}}\leq \beta\left(|\Theta(t_0)|_{\mathcal{D}},t-t_0\right)$, we say that the 
point $\Theta=0$ is {\em globally asymptotically stable (GAS) on $\mathcal{D}$}. 
\end{definition}

\begin{definition}[Strict CLF]
\label{def-CLF}
A continuously differentiable function $\Theta\mapsto V$
is a \textit{control Lyapunov function} (CLF) with respect to
\eqref{eq:spheric_3d_sys} if (i) there exist class $\mathcal{K}$
functions $(\bar\alpha_1,\bar\alpha_2)$ such that, for all $\Theta$ in
$\Sigma = \{\rho\geq 0\}\times \mathcal{T}$,
$\bar\alpha_1(|\Theta|_{\mathcal{D}}) \le V(\Theta) \le
\bar\alpha_2(|\Theta|_{\mathcal{D}})$, and (ii) for all $\Theta \neq 0$
in $\Sigma$, there exists
$\left(\dfrac{v}{\rho},q,\dfrac{r}{\cos\theta}\right)\in \mathbb{R}^3$
such that $\dot V(\Theta) < 0$.
\end{definition}

Finally, the inverse optimal redesign we pursue in this paper depends on the existence of a strict CLF. Hence, we take the following strict CLF derived as a Corollary from~\cite[Thm.~1]{kim2026spherical3d}.

\begin{corollary}[{\!\cite[Thm.~1]{kim2026spherical3d}}]
For the system~\eqref{eq:spheric_3d_sys} and for $k_1,k_2,k_3,k_4,k_5 > 0$, the function
\begin{align}\label{eq:CLF}
    V(\Theta) = \frac{1}{2}\left(k_1\rho^2 + k_2\delta^2 + k_3\tan^2\zeta + k_4e_1^2 + k_5e_2^2\right)\,,
    \end{align}
    where
    \begin{align}
    e_1 &= \gamma + \arctan(k_2\delta\cos\zeta)\label{eq:e1_def}\\
    e_2 &= \theta + \arctan\bigl(\eta(\delta,\zeta)\bigr)\,,\label{eq:e2_def}
    \end{align}
    and
    \begin{align}\label{eq:eta_def}
    \eta(\delta,\zeta)\coloneqq
    \frac{k_3\sin\zeta+\tan\zeta}{\sqrt{1+k_2^2\delta^2\cos^2\zeta}}\,,
    \end{align}
    is a strict CLF in accordance with Definition~\ref{def-CLF}.
\end{corollary}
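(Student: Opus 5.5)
The proof splits into the two requirements of Definition~\ref{def-CLF}: the class-$\mathcal K$ sandwich bounds (i), and the existence of inputs that make $\dot V<0$ away from the origin (ii). The sandwich bounds follow from elementary properties of $\arctan$. The decrease condition is handled by a case split on whether the "backstepping errors" $e_1,e_2$ vanish. Throughout I write $u\coloneqq v/\rho$ and
$$c\coloneqq \frac{1}{\sqrt{1+k_2^2\delta^2\cos^2\zeta}}\in(0,1].$$

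For (i), I use $|\arctan s|\le |s|$ together with $|\sin\zeta|\le|\tan\zeta|$ and $c\le 1$. These give
$$|e_1|\le |\gamma|+k_2|\delta|, \qquad |e_2|\le |\theta|+(k_3+1)|\tan\zeta|.$$
Hence $V\le \bar\alpha_2(|\Theta|_{\mathcal D})$ with $\bar\alpha_2(s)=Ks^2$ for a constant $K$ depending on the gains. In the other direction,
$$|\gamma|\le |e_1|+k_2|\delta|, \qquad |\theta|\le |e_2|+(k_3+1)|\tan\zeta|.$$
So $|\Theta|_{\mathcal D}$ is bounded by a linear combination of $\rho,|\delta|,|\tan\zeta|,|e_1|,|e_2|$. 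Each of these is at most a constant times $\sqrt{V}$, which yields $V\ge \bar\alpha_1(|\Theta|_{\mathcal D})$ with $\bar\alpha_1(s)=\kappa s^2$ for some $\kappa>0$. Both bounds hold on all of $\Sigma$, including at $\rho=0$. $V$ is $C^1$ on $\Sigma$ because $\cos\zeta>0$ there.

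For (ii), I differentiate $V$ along \eqref{eq:spheric_3d_sys}. From the derivative of $e_1$ in \eqref{eq:e1_def}, the input $\tilde r$ enters $\dot V$ only through $-k_4e_1\tilde r$. From \eqref{eq:e2_def}, $q$ enters only through $k_5e_2 q$. Every other term is proportional to $u$. Thus
$$\dot V = u\,A(\Theta)+k_4e_1\bigl(B_1(\Theta)u-\tilde r\bigr)+k_5e_2\bigl(q+B_2(\Theta)u\bigr)$$
for smooth functions $A,B_1,B_2$. If $e_1\neq0$ or $e_2\neq0$, I set $u=0$ and $\tilde r=e_1$, $q=-e_2$, which gives $\dot V=-k_4e_1^2-k_5e_2^2<0$. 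It remains to treat the manifold $e_1=e_2=0$ and show $A(\Theta)<0$ there unless $\Theta=0$; then $u=1$ (with $\tilde r=B_1$, $q=-B_2$) suffices.

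The main work is this last step, evaluating $A$ on $e_1=e_2=0$. There $\cos\gamma=c$ and $\sin\gamma=-k_2\delta\cos\zeta\,c$. Also $\cos\theta=1/\sqrt{1+\eta^2}>0$ and $\sin\theta=-\eta\cos\theta$. Substituting term by term should give the following.

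The $\rho$-term becomes $-k_1\rho^2\cos\theta\,(\cos\zeta\,c+\eta\sin\zeta)$. Here $\eta\sin\zeta=c(k_3\sin^2\zeta+\tan\zeta\sin\zeta)\ge0$, so the term is strictly negative when $\rho>0$.

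The $\delta$-term becomes $-k_2^2\delta^2c\cos\theta\le 0$.

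For the $\zeta$-term, the choice of $\eta$ makes the $\sin\zeta$ contributions cancel. It reduces to $-k_3c\cos\theta\tan^2\zeta\le0$.

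Hence $A<0$ whenever $\rho>0$, $\delta\neq0$, or $\zeta\neq0$. If $\rho=\delta=\zeta=0$, then $e_1=e_2=0$ forces $\gamma=\theta=0$, so $\Theta=0$. This identity-checking for the $\zeta$-term is the delicate computation, and I would verify it first.
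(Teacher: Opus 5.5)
Your proof is correct, but it takes a different route from the paper. The paper gives no argument here: it imports the statement from \cite[Thm.~1]{kim2026spherical3d}, where (as the introduction indicates) global exponential stability is established with this $V$. There the CLF property is presumably a by-product of an explicit stabilizing feedback that makes $\dot V$ negative definite.

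You instead verify Definition~\ref{def-CLF} directly and pointwise. Off the set $\{e_1=e_2=0\}$ you decrease $V$ with the steering inputs alone. On that set you show the surge coefficient $A(\Theta)$ is negative unless $\Theta=0$.

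In the key computation there is one coefficient slip. On $e_1=e_2=0$ one has $c\sin\zeta-\eta\cos\zeta=-c\,k_3\sin\zeta\cos\zeta$, because the $\tan\zeta$ part of $\eta$ cancels $c\sin\zeta$. The $\zeta$-term is therefore $-k_3^2\,c\cos\theta\tan^2\zeta$, not $-k_3\,c\cos\theta\tan^2\zeta$. This is harmless, since only the sign matters.

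Your route gives a self-contained, elementary argument that matches exactly what the definition asks. It also exposes the structure $\nu_2=k_5e_2$, $\nu_3=-k_4e_1$, and $\nu_1=A$ on $\{e_1=e_2=0\}$. That is precisely the fact $(\nu_1,\nu_2,\nu_3)=0\iff\Theta=0$ invoked in Step~1 of Theorem~\ref{thm:IOC_3d}.

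What your case split does not give is a single continuous feedback or an exponential convergence rate, which the cited theorem provides. Neither is needed here, since Theorem~\ref{thm:IOC_3d} builds its continuous feedback from $L_gV$ damping anyway.
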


\section{General Inverse Optimal Redesign}\label{sec:ioc_general}

With the strict CLF~\eqref{eq:CLF} in hand, we turn from stabilization alone to the design of feedback laws that are, in addition, optimal with respect to a meaningful cost. 
To state the re-design, we first recall the following transform.
 
\begin{definition}(Legendre--Fenchel transform)\label{def:legendre_Fenchel_3d}
Let $\mu$ be a class $\mathcal{K}_\infty[0,w)$ function whose derivative $\mu'$ is also a class $\mathcal{K}_\infty[0,w)$ function, where $w>0$ is finite or infinite. The mapping 
\begin{align} 
\ell\mu(s) \coloneqq \int_{0}^{s}(\mu')^{-1}(\tau)\,d\tau\,, \end{align} 
is the Legendre--Fenchel transform, where $(\mu')^{-1}$ denotes the inverse of $d\mu(s)/ds$.
\end{definition}

\vspace{-0.2cm}
\subsection{Families of inverse optimal stabilizers}
We now derive a family of inverse optimal controllers for the system~\eqref{eq:spheric_3d_sys} based on the strict CLF~\eqref{eq:CLF}.

\begin{theorem}\label{thm:IOC_3d}
Consider the system~\eqref{eq:spheric_3d_sys} on $\mathcal{D}$.
Define $\Theta\coloneqq[\rho,\delta,\gamma,\zeta,\theta]^\top$ and rewrite \eqref{eq:spheric_3d_sys} as
\begin{align}\label{eq:ioc_sys_3d}
\dot{\Theta} = \bar{g}_1(\Theta)\frac{v}{\rho} + g_2(\Theta)\,q + \bar g_3(\Theta)\frac{r}{\cos\theta}
\end{align}
with
\begin{align}
\bar{g}_1(\Theta) &\coloneqq
\begin{bmatrix}
\rho\bigl(\sin\theta\,\sin\zeta-\cos\theta\,\cos\zeta\,\cos\gamma\bigr)\\
\dfrac{\cos\theta}{\cos\zeta}\sin\gamma\\
\dfrac{\cos\theta}{\cos\zeta}\sin\gamma\\
\cos\theta\,\sin\zeta\,\cos\gamma+\sin\theta\,\cos\zeta\\
0
\end{bmatrix},
\end{align}
and
\begin{align}
g_2(\Theta)&\coloneqq
\begin{bmatrix}
0 & 0 & 0 & 0 & 1
\end{bmatrix}^{\!\top}\\
\bar g_3(\Theta)&\coloneqq
\begin{bmatrix}
0 & 0 & -1 & 0 & 0
\end{bmatrix}^{\!\top}\,.
\end{align}
Let $\mu_i\in\mathcal{K}_\infty[0,w_i)$, where $w_i > 0$ is finite or infinite and $i\in\{1,2,3\}$, and let $\varepsilon_i(\Theta)$ be continuous positive scalar-valued functions. Then, the cost functional
\begin{equation}\label{eq:J_thrm_ioc_bidir_no_roll}
J = \int_0^\infty \Biggl[l(\Theta) + \mu_1\!\left(\frac{|v|}{\varepsilon_1\rho}\right)
+ \mu_2\!\left(\frac{|q|}{\varepsilon_2}\right)
+ \mu_3\!\left(\frac{|r|}{\varepsilon_3|\cos\theta|}\right)\Biggr] {\rm d}t\,,
\end{equation}
where
\begin{align}\label{eq:run_cost_ioc_bidir_no_roll}
l(\Theta) \coloneqq \ell\mu_1(\varepsilon_1|\nu_1|)+\ell\mu_2(\varepsilon_2|\nu_2|)+\ell\mu_3(\varepsilon_3|\nu_3|)\,,
\end{align}
and $\nu_1(\Theta)\coloneqq L_{\bar{g}_1}V(\Theta)$, $\nu_2(\Theta)\coloneqq L_{g_2}V(\Theta)$, and $\nu_3(\Theta)\coloneqq L_{\bar g_3}V(\Theta)$ is evaluated along the strict CLF~\eqref{eq:CLF},
and for all initial conditions on $\mathcal{D}$, is minimized by the feedback law
\begin{subequations}\label{eq:ioc_ctrl_opt_3d}
\begin{align}
v^* &= -\rho\,\varepsilon_1 (\mu_1')^{-1}(\varepsilon_1|\nu_1|)\,\sgn(\nu_1)\\
q^* &= -\varepsilon_2 (\mu_2')^{-1}(\varepsilon_2|\nu_2|)\,\sgn(\nu_2)\\
r^* &= -\cos(\theta)\varepsilon_3 (\mu_3')^{-1}(\varepsilon_3|\nu_3|)\,\sgn(\nu_3)\,.
\end{align}
\end{subequations}
Additionally, the continuous feedback
\begin{subequations}\label{eq:ioc_ctrl_3d}
\begin{align}
v &= -\rho\,\varepsilon_1 \frac{\ell\mu_1(\varepsilon_1|\nu_1|)}{\varepsilon_1|\nu_1|}\,\sgn(\nu_1)\\
q &= -\varepsilon_2 \frac{\ell\mu_2(\varepsilon_2|\nu_2|)}{\varepsilon_2|\nu_2|}\,\sgn(\nu_2)\\
r &= -\cos(\theta)\varepsilon_3 \frac{\ell\mu_3(\varepsilon_3|\nu_3|)}{\varepsilon_3|\nu_3|}\,\sgn(\nu_3)\,,
\end{align}
\end{subequations}
is continuous in $(\nu_1,\nu_2,\nu_3)$ and renders $\Theta=0$ GAS on $\mathcal{D}$ in accordance with Definition~\ref{def-our-GAS}.
\end{theorem}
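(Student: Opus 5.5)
The proof follows the standard inverse-optimal template of Krsti\'c--Li and Sepulchre et al.: first show that the damping $L_gV$ feedback \eqref{eq:ioc_ctrl_3d} makes $\dot V$ negative definite, then show that the optimal law \eqref{eq:ioc_ctrl_opt_3d} minimizes $J$ by completing the "square" with Young's inequality for the Legendre--Fenchel pair.

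\textbf{Stabilization under \eqref{eq:ioc_ctrl_3d}.} Writing $u_1=v/\rho$, $u_2=q$, $u_3=r/\cos\theta$, we have $\dot V=\sum_i \nu_i u_i$. Under \eqref{eq:ioc_ctrl_3d} this gives
\begin{align*}
\dot V=-\sum_{i=1}^3 \ell\mu_i(\varepsilon_i|\nu_i|)\le 0 ,
\end{align*}
because $\varepsilon_i\nu_i\cdot\tfrac{\ell\mu_i(\varepsilon_i|\nu_i|)}{\varepsilon_i|\nu_i|}\sgn(\nu_i)=\ell\mu_i(\varepsilon_i|\nu_i|)$. Continuity at $\nu_i=0$ follows because $\ell\mu_i(s)/s\to(\mu_i')^{-1}(0)=0$ as $s\to0$.

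Strictness is the main obstacle. We need $\dot V<0$ whenever $\Theta\ne0$, which requires that $\nu_1=\nu_2=\nu_3=0$ imply $\Theta=0$. Since $\dot V$ is affine in the controls with no drift, the strict CLF property of Corollary~1 gives exactly this: if all $\nu_i$ vanished at some $\Theta\neq0$, then $\dot V=0$ for every input, which contradicts (ii) of Definition~\ref{def-CLF}. The resulting bound $\dot V\le-W(\Theta)$ involves a continuous $W$ that is positive on $\mathcal D\setminus\{0\}$ but is only guaranteed positive definite, not comparable to $|\Theta|_{\mathcal D}$. To obtain a $\mathcal{KL}$ estimate, I would argue as follows.
\begin{enumerate}[(a)]
\item The sandwich bounds $\bar\alpha_1,\bar\alpha_2$ give uniform stability in the metric.
\item Sublevel sets of $V$ are compact subsets of $\Sigma$, because $\tan^2\zeta$ blows up at $|\zeta|=\pi/2$. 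On these sets $W$ has a positive minimum away from $0$.
\end{enumerate}
A standard comparison argument (Khalil's Lemma 4.4 style, applied on the compact sublevel set) then yields $\beta$. One more point needs care. Solutions must remain in $\mathcal D$, i.e.\ $\rho>0$ for finite time. This holds because $\dot\rho=\rho\,(\cdot)\,u_1$ with $u_1$ bounded on sublevel sets, so $\rho(t)\ge\rho(0)e^{-ct}$.

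\textbf{Optimality of \eqref{eq:ioc_ctrl_opt_3d}.} For each $i$, Young's inequality for the Legendre--Fenchel pair reads
\begin{align*}
\mu_i(|u_i|/\varepsilon_i)+\ell\mu_i(\varepsilon_i|\nu_i|)\ge -\nu_i u_i ,
\end{align*}
with equality iff $u_i=-\varepsilon_i(\mu_i')^{-1}(\varepsilon_i|\nu_i|)\sgn(\nu_i)$, which is \eqref{eq:ioc_ctrl_opt_3d}. Summing over $i$ gives that the integrand of $J$ satisfies $L\ge-\dot V$ along any trajectory, with equality for $u^*$. Hence, for any input under which $V(\Theta(t))\to0$,
\begin{align*}
J\ge V(\Theta(0)),
\end{align*}
with equality at $u^*$.

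It remains to check that $u^*$ itself stabilizes. Under $u^*$,
\begin{align*}
\dot V=-\sum_i \varepsilon_i|\nu_i|(\mu_i')^{-1}(\varepsilon_i|\nu_i|) ,
\end{align*}
and each term dominates $\ell\mu_i(\varepsilon_i|\nu_i|)$ because $(\mu_i')^{-1}$ is increasing. The GAS argument above therefore applies verbatim, so $V\to0$ and $J^*=V(\Theta(0))$. This is the minimal cost among stabilizing inputs; for inputs with $J<\infty$, one shows stabilization from the positive definiteness of $l$, which holds by the same $\nu=0\Rightarrow\Theta=0$ fact. The running cost $l$ is therefore positive definite, making $J$ a meaningful cost.
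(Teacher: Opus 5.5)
Your proposal is correct and follows essentially the same route as the paper. Both arguments use $\dot V=-\sum_i\ell\mu_i(\varepsilon_i|\nu_i|)$ plus the strict-CLF fact that $\nu=0$ only at $\Theta=0$ for the continuous law. Both then show the optimal law's decrease rate dominates that one: you use monotonicity of $(\mu_i')^{-1}$, where the paper uses the identity $s(\mu')^{-1}(s)=\ell\mu(s)+\mu((\mu')^{-1}(s))$. Both finish with Young's inequality and its equality case, giving $J=V(\Theta(0))$ among inputs with $V\to0$. Your compact-sublevel-set $\mathcal{KL}$ argument and the invariance of $\rho>0$ fill in details the paper leaves implicit.

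The one loose spot is your closing remark that every finite-cost input stabilizes. This does not follow from positive definiteness of $l$ alone, but it does hold by the following chain:
\begin{enumerate}[(i)]
\item Young's inequality in both signs shows that $|\dot V|\le\sum_i|\nu_i u_i|$ is bounded by the integrand of $J$.
\item Hence the trajectory stays in the compact sublevel set $\{V\le V(\Theta(0))+J\}$, and $V(\Theta(t))$ converges.
\item Only then does $\int_0^\infty l\,{\rm d}t<\infty$ force that limit to be zero.
\end{enumerate}
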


\begin{proof}
\textbf{Step 1:} \eqref{eq:ioc_ctrl_3d} is continuous in $(\nu_1,\nu_2,\nu_3)$ and stabilizes \eqref{eq:ioc_sys_3d}.
From Definition~\ref{def:legendre_Fenchel_3d} and the Leibniz rule, $(\ell\mu)'(s) = (\mu')^{-1}(s)$. Applying L'H\^opital's rule gives
\begin{align}\label{eq:lhopital_3d}
\lim_{s\to 0}\frac{\ell\mu(s)}{s}=\lim_{s\to 0}(\mu')^{-1}(s)=0\,.
\end{align}
Since $\mu'$ is class $\mathcal{K}_\infty$ and thus $(\mu')^{-1}$ is class $\mathcal{K}_\infty$ and equals $0$ at $s=0$.
Hence each component in \eqref{eq:ioc_ctrl_3d} is continuous in $\nu_i$. Next, along \eqref{eq:ioc_sys_3d},
\begin{align}
\dot V
= \nu_1\frac{v}{\rho}+\nu_2 q+\nu_3\frac{r}{\cos\theta}\,.
\end{align}
Substituting \eqref{eq:ioc_ctrl_3d} yields
\begin{align}\label{eq:Vdot_ctrl_3d}
\dot V\big|_{\eqref{eq:ioc_ctrl_3d}}
= -\ell\mu_1(\varepsilon_1|\nu_1|)
  -\ell\mu_2(\varepsilon_2|\nu_2|)
  -\ell\mu_3(\varepsilon_3|\nu_3|) < 0\,.
\end{align}
By \cite[Lemma A1]{krstic1998inverse}, $\ell\mu_i$ are class $\mathcal{K}_\infty$ functions, and since $V$ is a strict CLF, it follows by definition that $(\nu_1,\nu_2,\nu_3) = (0,0,0)$ if and only if $\Theta = 0$. Thus, \eqref{eq:Vdot_ctrl_3d} holds and $\Theta=0$ is GAS on $\mathcal{D}$ under \eqref{eq:ioc_ctrl_3d}.

\textbf{Step 2:} \eqref{eq:ioc_ctrl_opt_3d} stabilizes \eqref{eq:ioc_sys_3d}.
Substituting \eqref{eq:ioc_ctrl_opt_3d} into $\dot V=\nu_1\frac{v}{\rho}+\nu_2 q+\nu_3\frac{r}{\cos\theta}$ gives
\begin{align}\label{eq:Vdot_ctrl_opt_3d}
\dot V\big|_{\eqref{eq:ioc_ctrl_opt_3d}}
= -\sum_{i=1}^3 \varepsilon_i|\nu_i|\,(\mu_i')^{-1}(\varepsilon_i|\nu_i|).
\end{align}
Using the identity (see~\cite[Lemma A1]{krstic1998inverse}) $s(\mu')^{-1}(s)=\ell\mu(s)+\mu\big((\mu')^{-1}(s)\big)$, \eqref{eq:Vdot_ctrl_opt_3d} becomes
\begin{align}
\dot V\big|_{\eqref{eq:ioc_ctrl_opt_3d}}
&= -\sum_{i=1}^3 \ell\mu_i(\varepsilon_i|\nu_i|)
   -\sum_{i=1}^3 \mu_i\!\Big((\mu_i')^{-1}(\varepsilon_i|\nu_i|)\Big)\nonumber\\
&\le -\sum_{i=1}^3 \ell\mu_i(\varepsilon_i|\nu_i|)
= \dot V\big|_{\eqref{eq:ioc_ctrl_3d}} < 0\,.
\end{align}
Hence, $\Theta=0$ is GAS on $\mathcal{D}$ under \eqref{eq:ioc_ctrl_opt_3d}.

\textbf{Step 3:} \eqref{eq:ioc_ctrl_opt_3d} minimizes \eqref{eq:J_thrm_ioc_bidir_no_roll}.
Add and subtract $\dot V$ inside the integral and use $\lim_{t\to\infty}V(\Theta(t))=0$ to write
\begin{align}
J
=& \int_0^\infty \Bigl[l(\Theta)+\mu_1\!\left(\frac{|v|}{\varepsilon_1\rho}\right)
+\mu_2\!\left(\frac{|q|}{\varepsilon_2}\right)\\
&+\mu_3\!\left(\frac{|r|}{\varepsilon_3|\cos\theta|}\right)
+\dot V-\dot V\Bigr] {\rm d}t\nonumber\\
=& V(\Theta(0))-\lim_{t\to\infty}V(\Theta(t))\nonumber\\
&+\int_0^\infty\Biggl[l(\Theta)
+\mu_1\!\left(\frac{|v|}{\varepsilon_1\rho}\right)+\mu_2\!\left(\frac{|q|}{\varepsilon_2}\right)\nonumber\\
&+\mu_3\!\left(\frac{|r|}{\varepsilon_3|\cos\theta|}\right) +\nu_1\frac{v}{\rho}+\nu_2 q+\nu_3\frac{r}{\cos\theta}\Biggr] {\rm d}t\\
=& V(\Theta(0))\nonumber\\
&+\int_0^\infty\Biggl[
\ell\mu_1(\varepsilon_1|\nu_1|)+\mu_1\!\left(\frac{|v|}{\varepsilon_1\rho}\right)+\nu_1\frac{v}{\rho}\nonumber\\
&
+\ell\mu_2(\varepsilon_2|\nu_2|)+\mu_2\!\left(\frac{|q|}{\varepsilon_2}\right)+\nu_2 q\nonumber\\
&
+\ell\mu_3(\varepsilon_3|\nu_3|)+\mu_3\!\left(\frac{|r|}{\varepsilon_3|\cos\theta|}\right)+\nu_3\frac{r}{\cos\theta}
\Biggr] {\rm d}t.\label{eq:ioc_J_proof_3d_1}
\end{align}
By the generalized Young inequality associated with the Legendre--Fenchel transform~\cite[Thm.~156]{hardy_inequalities_1989}, we get
\begin{align}
\left(\frac{v}{\varepsilon_1\rho}\right)\left(-\varepsilon_1\nu_1\right)
&\le \mu_1\!\left(\frac{|v|}{\varepsilon_1\rho}\right)+\ell\mu_1(\varepsilon_1|\nu_1|),\\
\left(\frac{q}{\varepsilon_2}\right)\left(-\varepsilon_2\nu_2\right)
&\le \mu_2\!\left(\frac{|q|}{\varepsilon_2}\right)+\ell\mu_2(\varepsilon_2|\nu_2|),\\
\left(\frac{r}{\varepsilon_3\cos\theta}\right)\left(-\varepsilon_3\nu_3\right)
&\le \mu_3\!\left(\frac{|r|}{\varepsilon_3|\cos\theta|}\right)+\ell\mu_3(\varepsilon_3|\nu_3|)\,.
\end{align}
Multiplying by $-1$ yields
\begin{subequations}\label{eq:ioc_young_3d}
\begin{align}
\nu_1\frac{v}{\rho}
&\ge -\mu_1\!\left(\frac{|v|}{\varepsilon_1\rho}\right)-\ell\mu_1(\varepsilon_1|\nu_1|),\label{eq:ioc_young1_3d}\\
\nu_2 q
&\ge -\mu_2\!\left(\frac{|q|}{\varepsilon_2}\right)-\ell\mu_2(\varepsilon_2|\nu_2|),\label{eq:ioc_young2_3d}\\
\nu_3\frac{r}{\cos\theta}
&\ge -\mu_3\!\left(\frac{|r|}{\varepsilon_3|\cos\theta|}\right)-\ell\mu_3(\varepsilon_3|\nu_3|).\label{eq:ioc_young3_3d}
\end{align}
\end{subequations}
Therefore, \eqref{eq:ioc_J_proof_3d_1} achieves its minimum when
\eqref{eq:ioc_young_3d} holds with equality. The equality condition in \cite[Thm.~156]{hardy_inequalities_1989} implies (for each $i$)
\begin{align}
a_i = (\mu_i')^{-1}(|b_i|)\frac{b_i}{|b_i|},
\end{align}
with
\begin{align*}
a_1\coloneqq \frac{v}{\varepsilon_1\rho},\quad a_2\coloneqq \frac{q}{\varepsilon_2},\quad 
a_3\coloneqq \frac{r}{\varepsilon_3\cos\theta}\,,
\end{align*}
and $b_i = -\varepsilon_i\nu_i$, which yields
\begin{subequations}
\begin{align}
v &= -\rho\,\varepsilon_1(\mu_1')^{-1}(\varepsilon_1|\nu_1|)\sgn(\nu_1)=v^*,\\
q &= -\varepsilon_2(\mu_2')^{-1}(\varepsilon_2|\nu_2|)\sgn(\nu_2)=q^*,\\
r &= -\cos(\theta)\varepsilon_3(\mu_3')^{-1}(\varepsilon_3|\nu_3|)\sgn(\nu_3)=r^*.
\end{align}
\end{subequations}
Thus, \eqref{eq:ioc_young_3d} are equalities if and only if $(v,q,r)=(v^*,q^*,r^*)$. Substituting the equalities into
\eqref{eq:ioc_J_proof_3d_1} gives $J=V(\Theta(0))$, proving that \eqref{eq:ioc_ctrl_opt_3d} minimizes \eqref{eq:J_thrm_ioc_bidir_no_roll}.
\end{proof}

\subsection{Stability margins}

A key practical benefit of inverse optimality is robustness to actuator uncertainty, namely, the mismatch between the commanded input $u=[u_1,u_2,u_3]^\top\coloneqq[v^*,q^*,r^*]^\top$ of~\eqref{eq:ioc_ctrl_opt_3d} and the actual input. The gain margin is infinite: replacing each $u_i$ by $\kappa_iu_i$ with any constant $\kappa_i\in (0,\infty)$ still leaves $\dot V$ in~\eqref{eq:Vdot_ctrl_opt_3d} negative definite, whereas optimal controllers for systems with drift tolerate only $\kappa_i\in[\tfrac{1}{2},\infty)$~\cite{sepulchre1997constructive}, due to the system~\eqref{eq:spheric_3d_sys} being driftless (see~\cite{Part2_kimIOC2025}). Likewise, GAS on $\mathcal{D}$ is preserved when each control input is distorted by any static actuator nonlinearity $\varphi_i(u_i)$ that preserves the sign of the commanded input (i.e., $s\varphi_i(s)>0$ for $s\neq0$), a sector margin in the sense of~\cite{sepulchre1997constructive}. The classical phase margin, a frequency-domain notion, does not extend to nonlinear systems~\cite{sepulchre1997constructive}. However, since $\dot V$ equals $\nu_1\tfrac{v}{\rho}+\nu_2 q+\nu_3\tfrac{r}{\cos\theta}$, the system is \emph{lossless} (i.e., $V$ acts as an energy altered only through the inputs). Consequently, the $L_gV$-proportional members of~\eqref{eq:ioc_ctrl_opt_3d} remain stabilizing when the plant receives $a(I+\mathcal{P})u$ in place of $u$, that is, the commanded input plus a dynamic distortion $Pu$, with $P$ strictly passive in the sense of~\cite{byrnes1991passivity}, scaled by an unknown constant gain $a > 0$. Any $a>0$ is admissible, in contrast to the requirement $a\geq\tfrac{1}{2}$ for systems with drift~\cite[Prop.~2]{krstic_rigidspacecraft_1999}, \cite{sepulchre1997constructive}.


\vspace{-0.2cm}
\section{Design Flexibility via Penalty Selection}\label{sec:examples}
\vspace{-0.1cm}

\begin{figure*}[!t]
\centering
\includegraphics[width=0.27\textwidth]{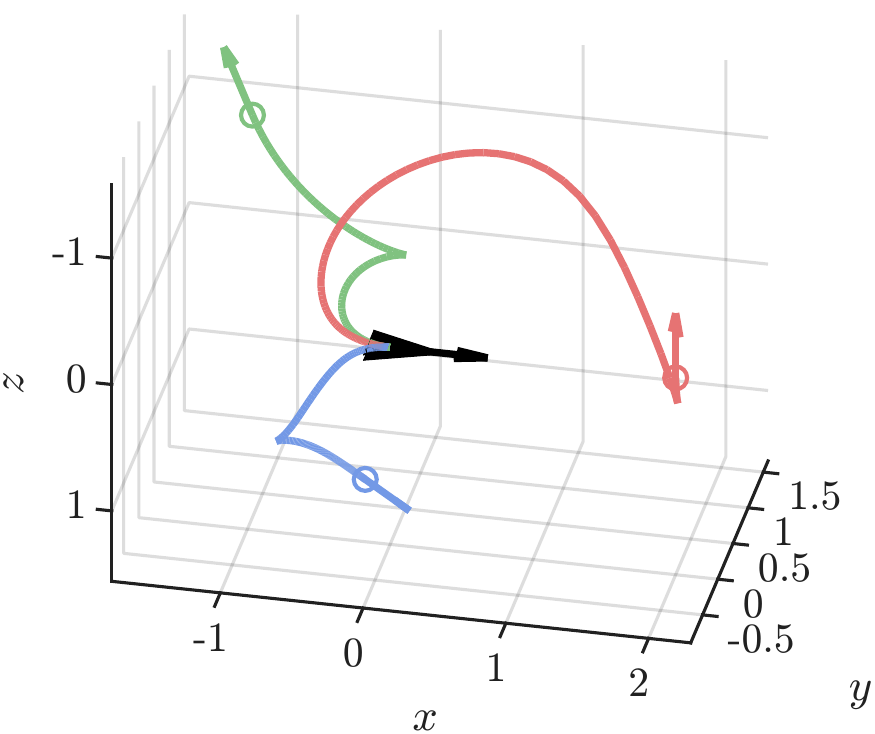}\hspace{0.02\textwidth}%
\includegraphics[width=0.27\textwidth]{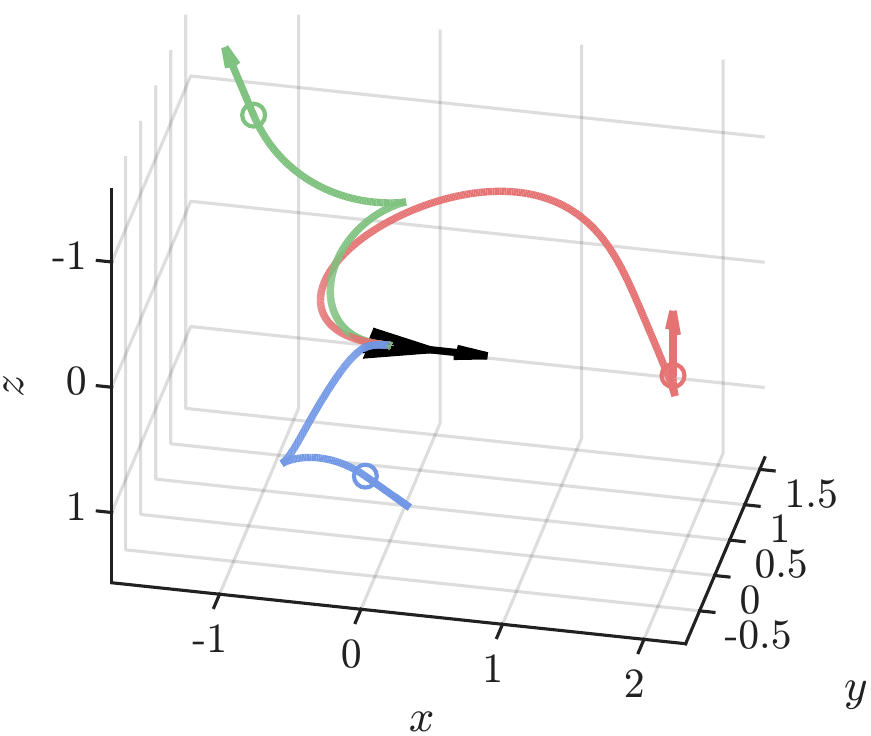}
\caption{Closed-loop 3D trajectories (NED convention) under Examples~\ref{ex:quad} (left) and~\ref{ex:lncos} (right) from three initial conditions $\Theta_1(0), \Theta_2(0),$ and $\Theta_3(0)$. The black arrow represents the target (origin).}
\label{fig:sim_traj}
\end{figure*}

\begin{figure*}[!t]
\centering
\includegraphics[width=0.345\textwidth]{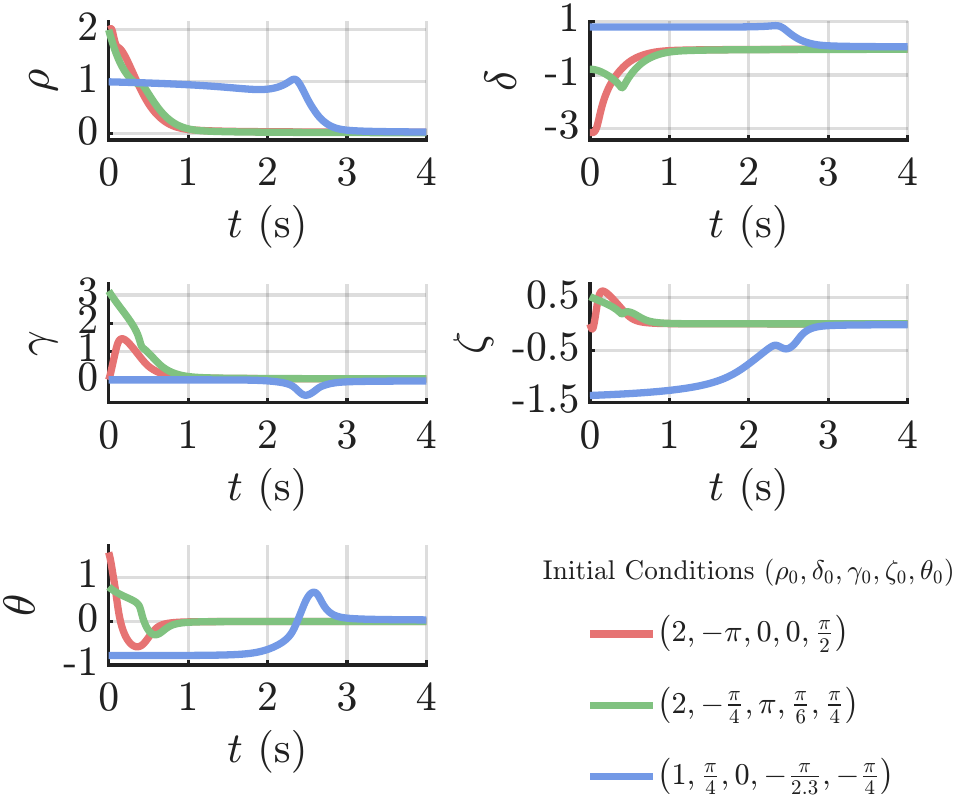}\hfill%
\includegraphics[width=0.345\textwidth]{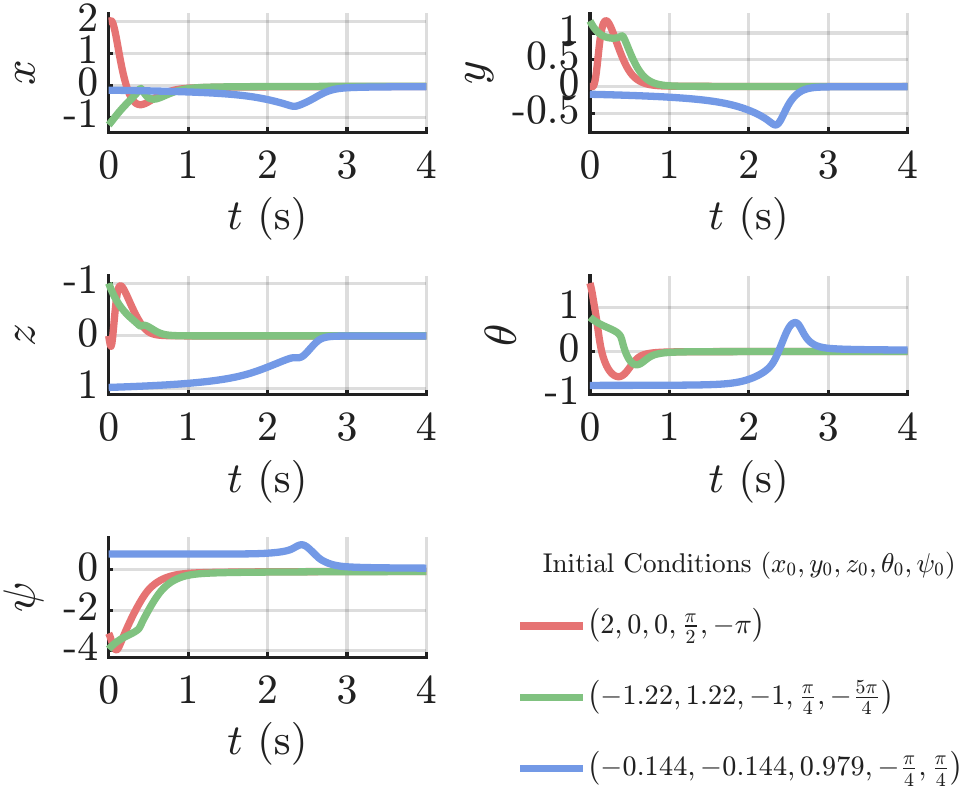}\hfill%
\includegraphics[width=0.242\textwidth]{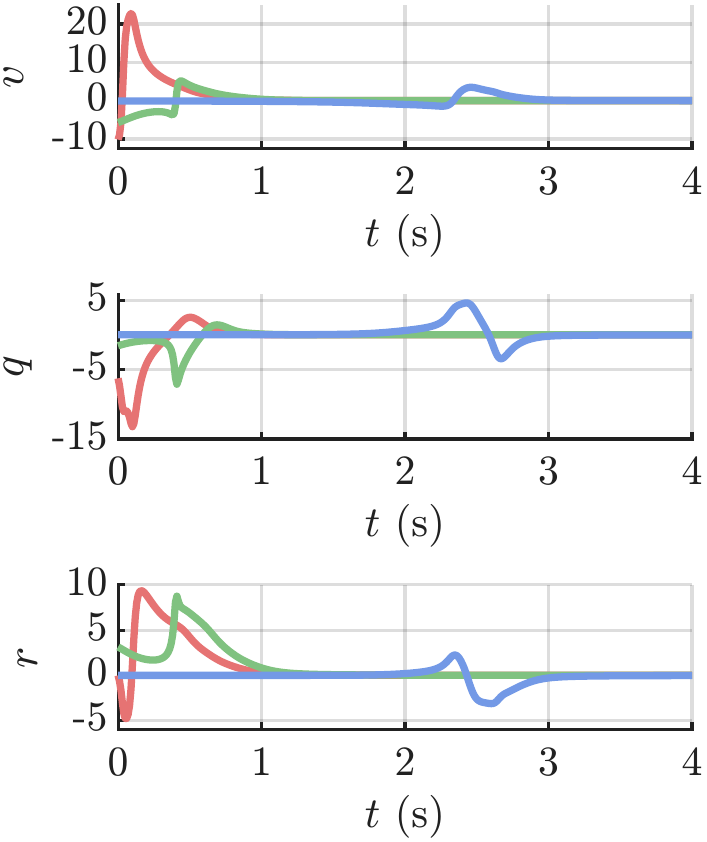}
\caption{Example~1 (quadratic penalty~\eqref{eq:mu_quad} with~\eqref{eq:eps_quad}, $Q = I_{5\times 5}$, and $\hat\varepsilon=10^{-4}$): closed-loop spherical states (left), Cartesian/Euler states (center), and optimal inputs~\eqref{eq:example1_control} (right), resulting in fast convergence with very large control effort.}
\label{fig:sim_quad}
\end{figure*}

\begin{figure*}[!t]
\centering
\includegraphics[width=0.345\textwidth]{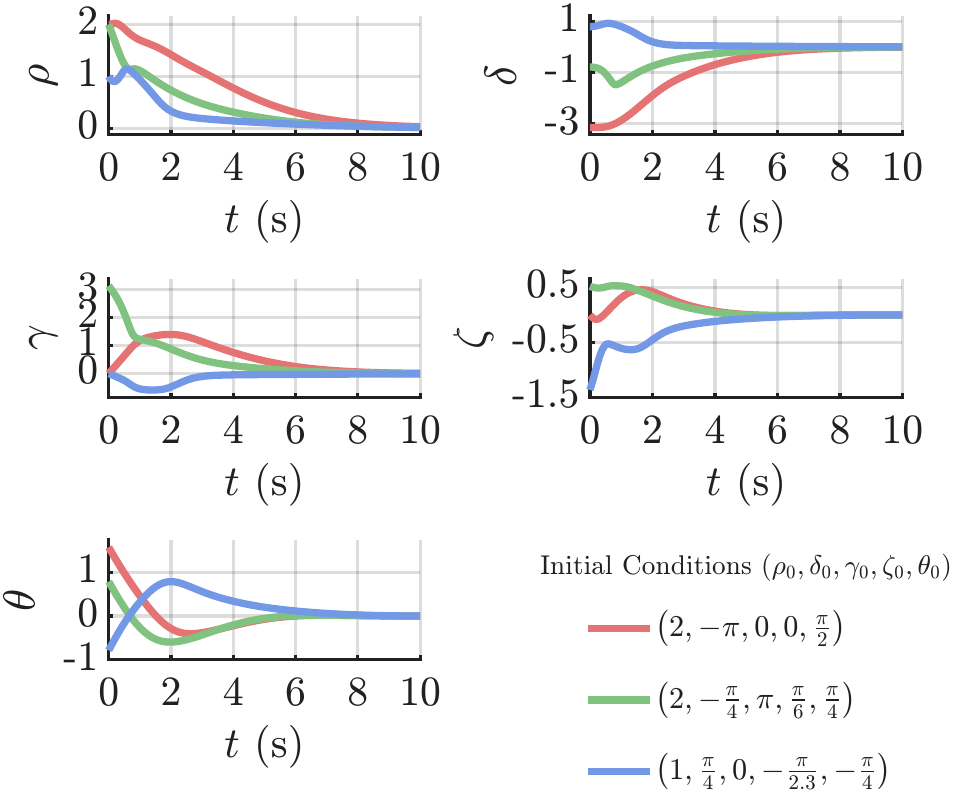}\hfill%
\includegraphics[width=0.345\textwidth]{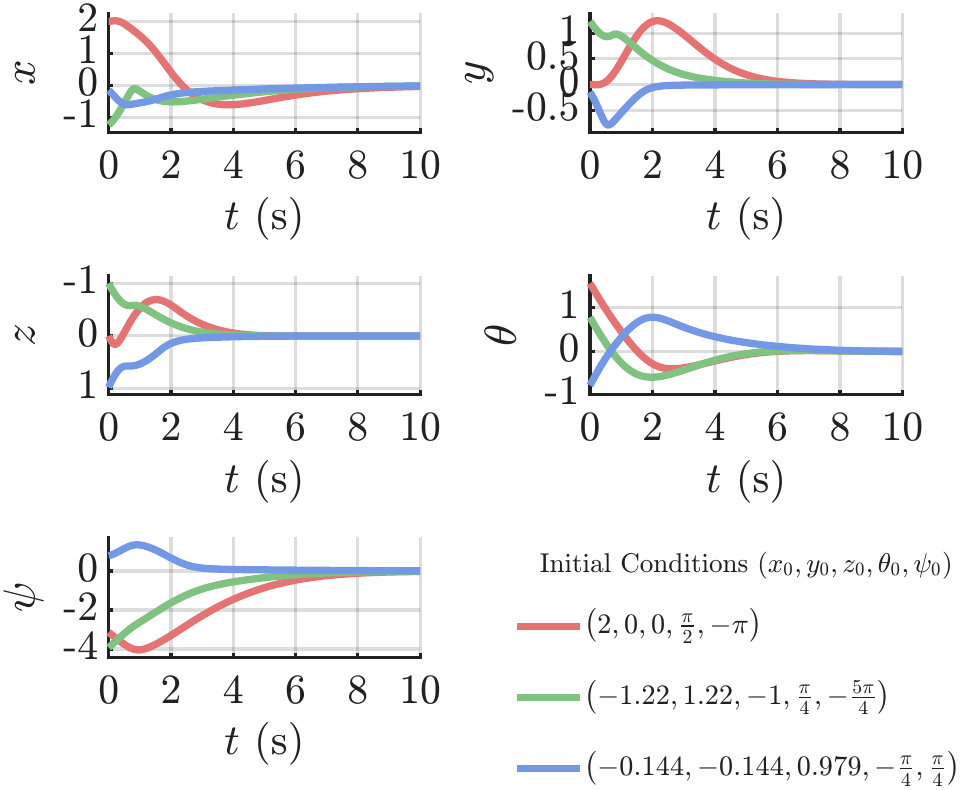}\hfill%
\includegraphics[width=0.242\textwidth]{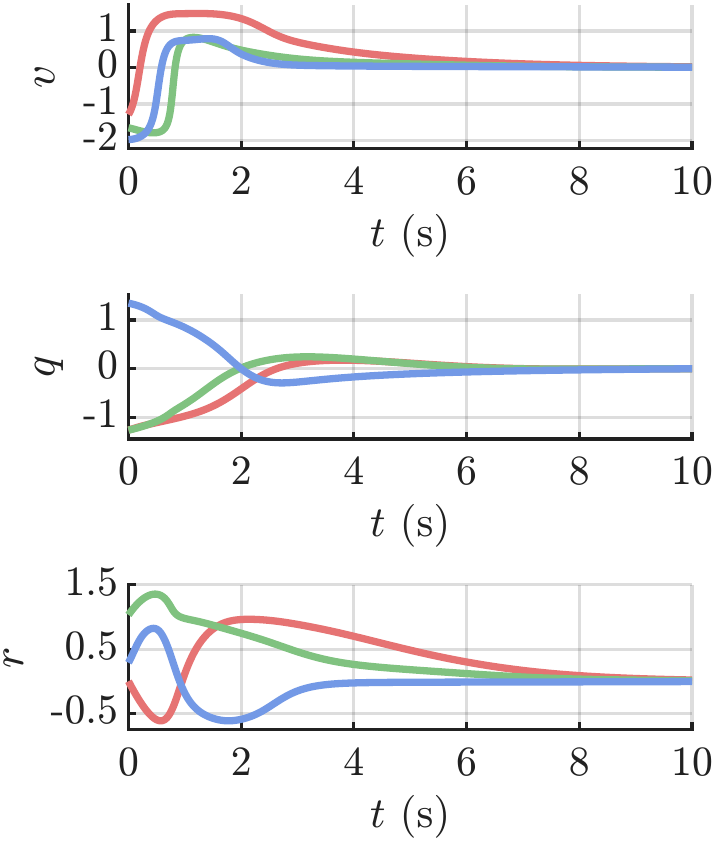}
\caption{Example~2 (input-constrained penalty, $c_i=0.5$): closed-loop spherical states (left), Cartesian/Euler states (center), and optimal inputs~\eqref{eq:example2_control} (right). All inputs remain within the bounds $|v(t)|\leq2$, $|q(t)| \leq \pi/2$, and $|r(t)| \leq \pi/2$ for all $t$.}
\label{fig:sim_lncos}
\end{figure*}

We now provide two examples to demonstrate the flexibility of Theorem~\ref{thm:IOC_3d}. Both examples below are simulated under the optimal law~\eqref{eq:ioc_ctrl_opt_3d} with CLF gains $(k_1,\dots,k_5)=(0.5,0.8,1.2,1.7,1)$, from the three initial conditions $\Theta_1(0)=\left(2,-\pi,0,0,\tfrac{\pi}{2}\right)$, $\Theta_2(0)=\left(2,-\tfrac{\pi}{4},\pi,\tfrac{\pi}{6},\tfrac{\pi}{4}\right)$, and $\Theta_3(0)=\left(1,\tfrac{\pi}{4},0,-\tfrac{\pi}{2.3},-\tfrac{\pi}{4}\right)$. While Theorem~\ref{thm:IOC_3d} permits different penalties $\mu_i$ for
each input, both examples below take $\mu_1=\mu_2=\mu_3$ for
clarity of exposition.

\begin{example}[Near classical direct optimal control]\label{ex:quad}
Consider the uniform quadratic penalty
\begin{align}\label{eq:mu_quad}
\mu_i(s) = \frac{s^2}{2}\,,
\end{align}
where $i \in \{1,2,3\}$, whose Legendre--Fenchel transform is $\ell\mu_i(s) = s^2/2$, since
$(\mu_i')^{-1}(s) = s$. The resulting optimal control law is
\vspace{-0.2cm}
\begin{subequations}\label{eq:example1_control}
    \begin{align}
    v^* &= -\rho\varepsilon_1^2 \nu_1\\
    q^* &= -\varepsilon_2^2\nu_2\\
    r^* &= -\cos(\theta)\varepsilon_3^2\nu_3 \,,
    \end{align}
\end{subequations}
which minimizes the cost functional~\eqref{eq:J_thrm_ioc_bidir_no_roll} that is of the classical quadratic form, with the running cost 
\begin{align}
    l(\Theta) = \frac{1}{2}\left(\varepsilon_1^2 \nu_1^2 + \varepsilon_2^2 \nu_2^2 + \varepsilon_3^2 \nu_3^2\right)\,.
\end{align}

Now, consider the choice
\begin{align}\label{eq:eps_quad}
\varepsilon_i = \sqrt{\frac{2\left(\Theta^\top Q\,\Theta + \hat\varepsilon\right)}{\nu_1^2 + \nu_2^2 + \nu_3^2 +\hat\varepsilon}}\,,
\end{align}
where $Q\succeq 0$ and $\hat\varepsilon > 0$. We observe that as $\hat\varepsilon \to 0$, the running cost approaches
\vspace{-0.15cm}
\begin{align}\label{eq:l_quad_limit}
l(\Theta) \to \Theta^\top Q\,\Theta\,,
\end{align}
recovering the classical
direct optimal control's quadratic in state running cost.
Fig.~\ref{fig:sim_traj} (left) and Fig.~\ref{fig:sim_quad} show the resulting closed-loop response with $Q = I_{5\times5}$ and $\hat\varepsilon = 10^{-4}$: convergence is fast at the cost of very large control effort (note the input scales in Fig.~\ref{fig:sim_quad}), which can be impractical for platforms with actuator limits and motivates Example~\ref{ex:lncos}, where the user prescribes explicit input bounds.
\end{example}

\vspace{-0.05cm}
\begin{example}[User-defined input constrained stabilization]\label{ex:lncos}
Consider the choice of
\vspace{-0.15cm}
\begin{align}
\mu_i(s) = -c_i\ln(\cos(s))\,,\quad c_i>0\,,
\end{align}
where $i\in\{1,2,3\}$, which is of class $\mathcal{K}_{\infty}[0,\pi/2)$ and $(\mu_i')^{-1}(s) = \arctan(s/c_i)$. The optimal control is then
\begin{subequations}\label{eq:example2_control}
    \begin{align}
    v^* &= -\rho\varepsilon_1\arctan\left(\frac{\varepsilon_1\nu_1}{c_1}\right)\\
    q^* &= -\varepsilon_2\arctan\left(\frac{\varepsilon_2\nu_2}{c_2}\right)\\
    r^* &= -\cos(\theta)\varepsilon_3\arctan\left(\frac{\varepsilon_3\nu_3}{c_3}\right)\,.
\end{align}
\end{subequations}

\vspace{-0.2cm}
For $|\nu_i| \ll c_i/\varepsilon_i$, the feedback is nearly linear in $\nu_i$, reducing, in the inputs $(v/\rho,\, q,\, r/\cos\theta)$, to the $L_gV$ damping law $ -(\varepsilon_i^2/c_i)\,\nu_i$, whereas as $c_i \to 0$ the feedback approaches the saturated signum law
$-\tfrac{\pi}{2}\varepsilon_i\,\mathrm{sgn}(\nu_i)$, i.e., essentially bang-bang control within the imposed bounds.

Now, impose the bounds $|v(t)| \le v_{\max} = 2$ on the surge input and $|q(t)| \le q_{\max}$, $|r(t)| \le r_{\max}$ with $q_{\max} = r_{\max} = \pi/2$ on the steering rates, for all $t$, and select
\begin{align}\label{eq:surge_epsilon_bound}
\varepsilon_1(\Theta)&=
\frac{v_{\max}}{0.01 + \rho}\frac{2}{\pi}\,,
\end{align}
and $\varepsilon_2 = \varepsilon_3 = 1$. The choice of~\eqref{eq:surge_epsilon_bound} prevents the surge input, which is proportional to $\rho$, from exceeding the imposed bound $v_{\mathrm{max}}$ for all $\rho$. Figure~\ref{fig:sim_lncos} shows the closed-loop response with $c_i=0.5$ and Fig.~\ref{fig:sim_traj} (right) the corresponding trajectories: every control input remains bounded, but at the price of visibly slower convergence than in Example~\ref{ex:quad}.
\end{example}

\section{Conclusion}

Leveraging the recently constructed strict CLFs in spherical coordinates, this paper develops a family of inverse optimal controllers for the 3D nonholonomic vehicle, each of which renders the origin GAS on the largest domain permitted by the transformation while minimizing a meaningful cost functional, without solving an HJB equation. Within this framework, the cost structure and input constraints are set by the choice of penalty function rather than by separate analysis, extending to 3D the design freedom previously available only for the unicycle. Furthermore, the quadratic choice recovers a near-classical running cost, while the bounded feedback laws resolve the unbounded control effort of the nominal design near the excluded $z$-axis set. The resulting feedback demands bidirectional surge, which some marine and aerial vehicles cannot execute. Future work will study ``cusp-free'' (forward surge only) stabilization and the extension to 3D vehicles with roll dynamics.


\makeatletter
\let\@oldthebib\thebibliography
\def\thebibliography#1{\@oldthebib{#1}\interlinepenalty=10000\relax}
\makeatother

\bibliographystyle{IEEEtranS}
\bibliography{bib,bib-Dubins,root}


\end{document}